\newtheorem{theorem}{Theorem}
\newtheorem*{corollary*}{Corollary}
\newtheorem{lemma}{Lemma}
\newtheorem*{theorem*}{Theorem}
\theoremstyle{definition}
\theoremstyle{definition}
\newtheorem{definition}{Definition}
\theoremstyle{theorem}
\newtheorem{proposition}{Proposition}
\theoremstyle{remark}
\newtheorem{remark}{Remark}
\title{H\"older regularity of the $\bar\partial-$equation on the polydisc}
\author[1]{Yu Jun Loo}
\author[2]{Alexander Tumanov}
\affil[1]{\small Department of Mathematics, University of Michigan, Ann Arbor, MI 48109, USA}
\affil[2]{\small Department of Mathematics, University of Illinois, Urbana, IL 61801, USA}
\date{}
\begin{document}

\maketitle

\begin{abstract}
In this note, we show the existence of a solution operator to the $\bar\partial-$equation in the polydisc that preserves H\"older regularity. This solution operator is constructed using Henkin's formula. It is a well-known fact that solution operators to the $\bar\partial-$equation on product domains do not improve H\"older regularity. Hence, this solution operator is optimal in that regard.
\end{abstract}

\section{Introduction}
It is a classical problem in complex analysis to describe solutions to the $\bar\partial-$equation with estimates in prescribed normed function spaces. The most general result on problems of this type was given by Sergeev and Henkin in \cite{SergeevHenkin}, giving uniform estimates for the $\bar\partial-$equation in any pseudoconvex polyhedron. Recently, the H\"older spaces $C^{k+\alpha}$ on product domains in $\mathbb{C}^n$ have been given some attention, and some results have been published on this matter. In \cite{Pan1}, \cite{Pan2}, a solution operator which loses arbitrarily small amounts of H\"older regularity was found, while in \cite{Zhang} a solution operator which preserves H\"older regularity was found in the case $n =2$. In the papers mentioned, the solution operators were based on Nijenhuis and Woolf's formula in \cite{NijenhuisWoolf}. Direct estimates of the norm of the solution operator using this formula suggests that it loses (arbitrarily small) amounts of H\"older regularity (see \cite{Pan2} \cite{Tumanov}). In this note however, we show the existence of an optimal solution operator that in fact \textit{preserves} H\"older regularity on the polydisc $\mathbb{D}^n \subset \mathbb{C}^n$ for any $n \geq 1$. Indeed, the main theorem of the paper is as follows.
\begin{theorem}
\label{main theorem}
For any integer $k \geq 0$, and $ 0<\alpha < 1$, Let $Z^{k+\alpha}_{(0,1)}(\mathbb{D}^n) \subseteq C^{k+\alpha}_{(0,1)}(\mathbb{D}^n)$ denote the subspace of $\bar\partial-$closed, H\"older $k+ \alpha$, $(0,1)-$forms on the polydisc. Then for all $g \in Z^{k+\alpha}_{(0,1)}(\mathbb{D}^n)$, the equation $\bar\partial u = g$ admits a bounded linear solution operator $$
H:Z^{k+\alpha}_{(0,1)}(\mathbb{D}^n) \rightarrow C^{k+\alpha}(\mathbb{D}^n).$$

Moreover, $H$ is canonical in the sense that for any $f \in C^{k+\alpha}_{(0,1)}(\mathbb{D}^n)$,  $$KH[f] = 0.$$ Here, $$K[u](z) = \frac{1}{(2\pi i)^n}\int_{(\partial\mathbb{D})^n} \frac{u(\zeta)}{\prod^n_{j=1}(\zeta_j - z_j)}d\zeta_1\wedge...\wedge d\zeta_n$$
is the Cauchy torus integral.
\end{theorem}
\begin{remark}
    Note that this definition of canonical is analogous to H\"ormander's canonical solution to the $\bar\partial$-equation over the domain $\mathbb{D}^n$ with apriori $L^2$ estimates. Indeed, recall that H\"ormander's solution is canonical in the sense that it is orthogonal to the kernel of $\bar\partial$ (ie. analytic functions) with respect to a weighted inner product over $\mathbb{D}^n$. Analogously, the property that $KH = 0$ is equivalent to the fact that $B[H[f],g] := \int_{(\partial\mathbb{D})^n} H[f](\zeta)g(\zeta)\ d\zeta_1\wedge...\wedge d\zeta_n = 0$ for all $g \in A(\mathbb{D}^n)\cap C^{k + \alpha}(\mathbb{D}^n)$. In this sense, $H[f]$ is orthogonal to the kernel of $\bar\partial$. Note however, that $B$ as defined above is not a complex inner product, but a symmetric bilinear form.
\end{remark}
\section{Preliminary results}
The proof of the main theorem rests on an analysis of Henkin's weighted formula for solutions to the $\bar\partial-$equation on the polydisc, which was announced in his survey paper \cite{Henkin} of 1985. The simplest case of this formula, obtained by setting all weights equal to $0$, has the following form.
\newline

\begin{theorem}
\label{Henkins formula theorem}
Let $Z_{(0,1)}(\bar{\mathbb{D}}^n) \subseteq  C_{(0,1)}(\bar{\mathbb{D}}^n)$ denote the space of (uniformly) continuous, $\bar\partial-$closed $(0,1)-$forms on the polydisc. Fix $g \in  Z_{(0,1)}(\bar{\mathbb{D}}^n)$. Then, $$u(z) = H[g](z) = - \sum^{n-1}_{r=0}\sum_{|J| = r} (-1)^{c(n,r)}\int_{\gamma_J(z)}g(\zeta)\wedge H_{J}(\zeta,z)$$

is a distributional solution to the equation $\bar\partial u = g $.
Here, $c(n,r)$ is an integer depending only on the constants $n,r$, while the sum ranges over all ordered $r$-tuples $J = (j_1,...,j_r)$ such that $\{j_1,...,j_r\}$ is a size $r$ subset of $\{1,...,n\}$. The complement of $J$ in $\{1,...,n\}$ is denoted by $\{k_1,...,k_{n-r}\}$, while the region of integration $\gamma_J(z)$ is given by $$\gamma_J(z) = \{\zeta \in \mathbb{D}^n : \zeta_{j_1} = z_{j_1},...,  \zeta_{j_r} = z_{j_r}, |z_{j_1}| \geq ... \geq |z_{j_r}| \geq |\zeta_{k_1}| = ... = |\zeta_{k_{n-r}}| \}.$$ The kernel of integration is the $n-r$ form $$H_{J}(\zeta,z) = \frac{1}{(2\pi i)^{n-r}}\cdot\bigwedge^{n-r}_{s = 1}\frac{d\zeta_{k_s}}{\zeta_{k_s} - z_{k_s}}.$$

Moreover, $KH \equiv0$ where $K$ is the Cauchy torus integral.
\end{theorem}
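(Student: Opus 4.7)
\emph{Proof plan.} My approach would be to reduce to smooth $g$ by approximation and then verify $\bar\partial u = g$ classically by a Stokes-theorem argument on the chains $\gamma_J(z)$. For the reduction, extend $g$ to a slightly larger polydisc by setting $g_\varepsilon(\zeta) = g((1-\varepsilon)\zeta)$, convolve with a standard smoothing kernel of mesh much smaller than $\varepsilon$ to obtain smooth $\bar\partial$-closed $g_\varepsilon\in C^\infty(\bar{\mathbb{D}}^n)$ converging uniformly to $g$. Once $\bar\partial H[g_\varepsilon]=g_\varepsilon$ is established classically, the distributional statement for the original $g$ follows from the $L^1_{\mathrm{loc}}$ stability of each of the integrals defining $H[\cdot]$ with respect to the sup norm, which in turn uses that the kernel singularity $1/(\zeta_{k_s}-z_{k_s})$ is locally integrable against the $(n-r+1)$-dimensional measure on $\gamma_J(z)$.

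For smooth $g$, the structural facts driving the argument are: the kernel $H_J(\zeta,z)$ is holomorphic in each free variable $z_{k_s}$ and in each $\zeta_{k_s}$ off the diagonals $\{\zeta_{k_s}=z_{k_s}\}$, so that both $\bar\partial_z H_J=0$ and $\bar\partial_\zeta H_J=0$ wherever the kernel is regular; the chain $\gamma_J(z)$ has real dimension $n-r+1$, matching the total degree of $g\wedge H_J$; and the boundary of $\gamma_J(z)$ decomposes into faces where the common modulus $\rho$ of the free coordinates $\zeta_{k_s}$ equals $|z_{j_r}|$ (a face that also appears in $\gamma_{J\setminus\{j_r\}}(z)$, with one additional modulus constraint) and a ``vertex'' face at $\rho=0$ that contributes nothing by dimensional reasons.

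To verify $\bar\partial_z H[g]=g$, I would differentiate under the integral. Since $H_J$ is holomorphic in the free $z_{k_s}$, no derivative lands on the kernel itself, leaving only boundary contributions from the $z$-dependence of the domain $\gamma_J(z)$ through the constraints $\zeta_{j_i}=z_{j_i}$ and $|\zeta_{k_s}|\le|z_{j_r}|$. Combining a parametric Stokes' theorem on $\gamma_J(z)$ — justified after excising $\varepsilon$-tubes around the diagonals and passing to the limit — with the identities $\bar\partial_\zeta(g\wedge H_J)=0$ and $\bar\partial g=0$, each such boundary integral can be matched with a boundary face of some neighbouring $\gamma_{J'}(z)$ with $|J'|=|J|\pm 1$. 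With the signs $(-1)^{c(n,r)}$ chosen correctly, these pair-wise cancellations collapse the double sum except for a Cauchy-residue contribution on an infinitesimal torus around $\{\zeta=z\}$, which by the iterated Cauchy integral formula equals $g(z)$.

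The main obstacle is the combinatorial and sign bookkeeping: one must identify oriented faces of $\gamma_J(z)$ and $\gamma_{J\cup\{k\}}(z)$ consistently and verify that the constants $c(n,r)$ can be chosen so that all matched contributions cancel, leaving exactly one uncancelled Cauchy-residue term. A closely related technical point is justifying Stokes on the non-compact, piecewise-smooth chains, whose closures touch both $\partial\mathbb{D}^n$ and the singular set of $H_J$; this is handled by standard $\varepsilon$-excision, but verifying that the surviving $\varepsilon$-tube contribution actually converges to $g(z)$ — rather than a weighted version of it — is the analytically substantive step of the proof.
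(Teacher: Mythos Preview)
The paper does not give its own proof of this statement. Theorem~2 is presented as Henkin's formula, announced in \cite{Henkin}, and the text immediately refers the reader to \cite{HenkinPolyakov2} for a proof in the more general setting of analytic polyhedra. So there is no in-paper argument to compare your proposal against.

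That said, your outline is in the spirit of the standard Henkin--Polyakov approach: reduce to smooth data by approximation, apply Stokes' theorem on the chains $\gamma_J(z)$ after excising the singular set of the kernel, match boundary faces between levels $r$ and $r\pm 1$ so that the double sum telescopes, and identify the sole surviving contribution as a Cauchy residue yielding $g(z)$. You correctly isolate the two genuinely nontrivial points --- the orientation/sign bookkeeping that fixes the constants $c(n,r)$, and the verification that the $\varepsilon$-tube contribution converges to exactly $g(z)$ --- but you only assert that they can be carried out. At the level of a proof plan that is reasonable, but be aware that executing the face-matching combinatorics carefully is precisely where the work lies, and is presumably why the present paper defers to the literature rather than supplying a proof.
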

\medskip

A version of this formula with weights equal to $1$ was used in \cite{HenkinPolyakov1} to solve an interpolation problem in the polydisc, while a proof of the weighted formula for the more general class of analytic polyhedra appears in \cite{HenkinPolyakov2}. According to \cite{HenkinPolyakov2}, Henkin's formula gives uniform estimates for the $\bar\partial-$equation in the sup-norm. In addition, Henkin's formula also yields a bounded solution operator that preserves H\"older regularity for $\alpha \in (0,1)$. Stated precisely, we have the following theorem. 
\begin{theorem} \label{k = 0 main result}
Let $0< \alpha < 1$ and  $g \in Z^\alpha_{(0,1)}(\mathbb{D}^n)$ be a $\bar\partial-$closed H\"older$-\alpha$, $(0,1)-$form in the distributional sense. Then Henkin's solution operator to the $\bar\partial-$equation, restricts to a bounded linear operator $$H:  Z^\alpha_{(0,1)}(\mathbb{D}^n) \rightarrow C^\alpha(\mathbb{D}^n).$$
\end{theorem}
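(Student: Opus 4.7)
The plan is to bound each term $T_J[g](z)=-\int_{\gamma_J(z)}g(\zeta)\wedge H_J(\zeta,z)$ of Henkin's formula separately in the $C^\alpha$ norm and then sum. I would parameterize $\gamma_J(z)$ by $(\rho,\theta_1,\ldots,\theta_{n-r})$ with $\rho\in[0,|z_{j_r}|]$ and $\zeta_{k_s}=\rho e^{i\theta_s}$; since $\zeta_{j_l}=z_{j_l}$ is held fixed on the slice, $d\bar\zeta_{j_l}=0$ there, so only the components $g_{k_{s'}}$ of $g$ in the free directions survive in the pullback of $g\wedge H_J$. A routine computation shows that the integrand becomes $g_{k_{s'}}$ composed with the parameterization, divided by the product of Cauchy factors $\prod_{s=1}^{n-r}(\rho e^{i\theta_s}-z_{k_s})$, integrated against a smooth top form proportional to $\rho^{n-r}\,d\rho\wedge d\theta_1\wedge\cdots\wedge d\theta_{n-r}$ (up to explicit constants and angular phases).

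Given $z,z'\in\mathbb{D}^n$ with $\delta=|z-z'|$, the point $z$ enters $T_J[g]$ through three channels: the Cauchy kernel via $z_{k_s}$, the slice position via $z_{j_l}$, and the radial bound via $|z_{j_r}|$. I would handle these in turn. For the kernel channel, apply the classical near/far split in each $\zeta_{k_s}$ plane: on the near region $\{|\zeta_{k_s}-z_{k_s}|\le 2\delta\}$ subtract the pointwise value $g(z^*)$ for a suitable slice representative $z^*$, using H\"older continuity to gain a $\delta^\alpha$ factor that absorbs the logarithmic singularity of the Cauchy kernel; on the complement, the kernel difference is bounded by $\delta$ times a locally integrable product, yielding at worst an $O(\delta\log(1/\delta))$ contribution, which is absorbed into $\delta^\alpha$ for $\alpha<1$. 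For the slice channel, the H\"older continuity of $g$ in $z_{j_l}$ directly supplies $\|g\|_{C^\alpha}|z_{j_l}-z'_{j_l}|^\alpha$, leaving a fixed multi-Cauchy integral controlled by the sup-norm estimate of \cite{HenkinPolyakov2}. For the radial channel, the change in $|z_{j_r}|$ produces an annular region of radial width $\le\delta$ on which, after integrating out the angles $\theta_s$, the integrand is $L^1$ in $\rho$ and therefore contributes $O(\delta)$.

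The main obstacle I anticipate lies in the kernel channel in the higher-dimensional cases $n-r\ge 2$: each individual angular integral $\int d\theta_s/|\rho e^{i\theta_s}-z_{k_s}|$ diverges logarithmically when $\rho$ approaches $|z_{k_s}|$, so the joint estimate must order the $(\rho,\theta_1,\ldots,\theta_{n-r})$-integration in such a way that the $\rho$ integration simultaneously regularizes every $\theta_s$-singularity. The device to overcome this is to repackage selected pairs $d\rho\wedge d\theta_s$ as the planar measure $d\zeta_{k_s}\wedge d\bar\zeta_{k_s}/(2i\rho)$ so as to recover the planar integrability $\int_{\mathbb{D}}|\zeta-z|^{-1}\,dA<\infty$, and to cycle which index $s$ plays this role as $\rho$ sweeps through the various $|z_{k_s}|$. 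The resulting constant degenerates precisely as $\alpha\to 1$, reflecting the well-known failure of the Cauchy-Green operator to preserve Lipschitz regularity and justifying the restriction $\alpha<1$ in the statement.
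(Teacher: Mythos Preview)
Your overall architecture matches the paper's: restrict to an open sector $D_\sigma$, and separate the dependence of each term of $H[g]$ into the slice variables $z_{j_l}$ (your ``slice channel''; the paper's parameter $b$), the radial bound $|z_{j_r}|$ (your ``radial channel''; the paper's parameter $a$), and the Cauchy--kernel variables $z_{k_s}$ (your ``kernel channel''; the paper's variables $z_t$). Your treatment of the slice and radial channels is essentially the paper's Lemma~3.

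The kernel channel is where the proposal has a genuine gap. In the far region you implicitly bound
\[
\delta\int_{\{|\zeta_{k_t}-z_{k_t}|>2\delta\}}\frac{|g|}{|\zeta_{k_t}-z_{k_t}|^{2}\prod_{s\ne t}|\zeta_{k_s}-z_{k_s}|}\,\rho^{n-r}\,d\rho\,d\theta_1\cdots d\theta_{n-r}
\]
by $O(\delta\log(1/\delta))$. But once absolute values are taken, each remaining angular integral contributes a factor $1+|\log|\rho-|z_{k_s}|||$, and the $\rho$--integral of this against $1/\max(|\rho-|z_{k_t}||,\delta)$ is of order $(1+|\log\eta|)\,|\log\delta|$ with $\eta=\big||z_{k_t}|-|z_{k_s}|\big|$. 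On the open sector $\eta>0$ but is not bounded below, so the resulting $C^\alpha$ constant blows up near the sector walls; the estimate is not uniform, and the sector--covering argument then fails. Your ``cycling'' device does not cure this: switching which $\theta_s$ absorbs $d\rho$ on each $\rho$--interval only transfers the nonuniformity to the transition radii.

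The paper's fix is both simpler and decisive. For a fixed target variable $z_t$ it uses the single identity $d\bar\zeta_1=(\zeta_t/\zeta_1)\,d\bar\zeta_t$ on $\{|\zeta_1|=\cdots=|\zeta_q|\}$ to repackage \emph{once}, writing the whole term as a one--variable Cauchy--Green integral
\[
\int_{|\zeta_t|\le|a|} H_t(\zeta_t,\hat z,a,b)\,\frac{d\bar\zeta_t\wedge d\zeta_t}{\zeta_t-z_t},
\qquad
H_t=\int_{|\xi_s|=1,\ s\ne t}\frac{(\zeta_t/\zeta_1)\,h}{\prod_{s\ne t}(\xi_s-z_s/|\zeta_t|)}\,\bigwedge_{s\ne t}d\xi_s .
\]
The inner integral $H_t$ is a Cauchy \emph{torus} integral of a $C^\alpha$ function, hence \emph{uniformly bounded} by a constant times $\|h\|_{C^\alpha}$, independent of $\rho$ and of all $z_s$. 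Then the standard fact that the planar Cauchy--Green operator maps $L^\infty$ to $C^{1-\epsilon}$ gives $C^\alpha$ regularity in $z_t$ with a uniform constant. No absolute values are taken inside the angular integrals, the logarithmic factors never appear, and no cycling is needed. This boundedness of the inner Cauchy torus integral on $C^\alpha$ data is the one ingredient your kernel--channel argument is missing.
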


In fact, the solutions produced by Henkin's formula agrees with the solutions produced by Nijenhuis and Woolf in \cite{NijenhuisWoolf} and studied by Pan and Zhang in \cite{Pan2} and \cite{Zhang}.

\begin{definition}{(\textit{Nijenhuis and Woolf's Formula})} \newline
Let $k \geq 0, k\in \mathbb{Z}$. For any $f \in C^{k+\alpha}(D)$, $z \in D$, let $$T_j[f](z) = \frac{1}{2\pi i}\int_{\mathbb{D}}\frac{f(z_1,...,\zeta_j,z_{j+1},...,z_n)}{\zeta_j -z_j}d\zeta_j\wedge d\bar\zeta_j$$
$$S_j[f](z) = \frac{1}{2\pi i}\int_{\partial \mathbb{D}}\frac{f(z_1,...,\zeta_j,z_{j+1},...,z_n)}{\zeta_j -z_j}d\zeta_j.$$
Let 
$$\tilde{S}_{1} = id,\text{ } \tilde{S}_{k} = S_{k-1}...S_{1}, \text{ for all } 1 < k \leq n.$$
For any $g \in Z_{(0,1)}(\mathbb{D}^n)$, $g = \sum_i g_i d\bar{z_i}$ define Nijenhuis and Woolf's formula to be
$$T[g] = \sum_{j=1}^n T_{j}\tilde{S}_{j}[g_{j}],$$
then $\bar\partial T[g] = g$.
\end{definition}
\begin{remark}
As mentioned in the introduction, direct estimates of the operator norm of $T$ for $n \geq 2$ using Nijenhuis and Woolf's formula suggest that it loses (arbitrarily small) amounts of H\"older regularity. This is due to the fact that the Cauchy integral operators $S_j$ lose H\"older regularity in parameters, as observed in \cite{Tumanov} and \cite{Pan2}. However, both Henkin's formula and Nijenhuis and Woolf's formula in fact describe the \textit{same} solution operator. That is, $T = H$. This is the content of the subsequent lemma. Later, we will exploit this relationship to show that $H$ (and therefore $T$) in fact \textit{preserves} H\"older regularity.
\end{remark} 

\begin{lemma}\label{K and H are equal lemma}
$KT \equiv KH \equiv  0$. Consequently, for all $k \in \mathbb{Z}_{\geq0}$,$\alpha\in(0,1)$ and $f \in Z^{k+\alpha}_{(0,1)}(\mathbb{D}^n)$ we have $T[f] = H[f]$ 
\end{lemma}

\begin{proof}
Suppose for the moment that $KT \equiv KH \equiv  0$. Fix $f \in Z^{k+\alpha}_{(0,1)}(\mathbb{D}^n)$ and let $w := H[f] - T[f]$. Then $\bar\partial w= f - f = 0$ so that $w$ is holomorphic. Hence, $w = K[w] = KH[f] - KT[f] = 0$. So $T[f] = H[f]$.

It remains to show that $KH \equiv 0$ and $KT \equiv 0$. To see the former, fix $g \in Z_{(0,1)}(\bar{\mathbb{D}}^n)$, and let $d^2\zeta_j = d\bar{\zeta}_j\wedge d \zeta_j$. Note that for $|z_1| =... = |z_n| = 1$, $H$ is the sum of terms each with a region of integration given by  $$\gamma_J(z) = \{\zeta \in \mathbb{D}^n : \zeta_{j_1} = z_{j_1},...,  \zeta_{j_r} = z_{j_r}, 1 \geq |\zeta_{k_1}| = ... = |\zeta_{k_{n-r}}| \}.$$ for some $r$-tuple $J = (j_1, ... j_r)$. Hence, $K[H[g]]$ is given by $K$ applied to the sum of terms of the form 
\begin{align*}
\begin{aligned}
&\int_{|\zeta_{k_1}|=... = |\zeta_{k_{n-r}}|\leq 1} \frac{g_{k_s}(\zeta_{k_1},...\zeta_{k_{n-r}},z_J)}{(\zeta_{k_1}-z_{k_1})...(\zeta_{k_{n-r}}-z_{k_{n-r}})} d\zeta_{k_1}\wedge ...\wedge d^2\zeta_{k_s}\wedge ...\wedge d\zeta_{k_{n-r}} \\ 
=\text{ }&\int_{|\zeta_{k_s}|\leq 1}\bigg(\int_{|\zeta_{k_1}|=...=\widehat{|\zeta_{k_s}|}=...|\zeta_{k_{n-r}}|\leq 1} \frac{g_{k_s}(\zeta_{k_1},...\zeta_{k_{n-r}},z_J)}{(\zeta_{k_1}-z_{k_1})...\widehat{(\zeta_{k_s}- z_{k_s})}...(\zeta_{k_{n-r}}-z_{k_{n-r}})} d\zeta_{k_1} ...\widehat{d\zeta_{k_s}} ...d\zeta_{k_{n-r}}\bigg)  \frac{d^2\zeta_{k_s}}{(\zeta_{k_s}- z_{k_s})} \\
=\text{ }&T_{k_s}[G].
\end{aligned}
\end{align*}
Here, $$G(z_{1},...,\zeta_{k_s},....,z_{n}) = \int_{|\zeta_{k_1}|=...=\widehat{|\zeta_{k_s}|}=...|\zeta_{k_{n-r}}|\leq 1} \frac{g_{k_s}(\zeta_{k_1},...\zeta_{k_{n-r}},z_J)}{(\zeta_{k_1}-z_{k_1})...\widehat{(\zeta_{k_s}- z_{k_s})}...(\zeta_{k_{n-r}}-z_{k_{n-r}})} d\zeta_{k_1}...\widehat{d\zeta_{k_s}}...d\zeta_{k_{n-r}}.$$

However, since $S_jT_j = 0$ for all $j = 0,...n$ and $K = S_n...S_1$ by virtue of the fact that these operators commute, $KT_{k_s}[G] = 0$. Hence $KH[g] = 0$. Since $g$ was arbitrary, $KH = 0$. Likewise, since $T[g] = \sum_{j=1}^n T_{j}\tilde{S}_{j}[g_{j}]$, we have $KT = 0$.

\end{proof}
\begin{remark}
As a consequence of the proof of Lemma \ref{K and H are equal lemma}, there is a unique solution operator to the $\bar\partial$ equation with vanishing cauchy torus integral. In addition, we may view both $H$ and $T$ as different formulae for this canonical solution operator. As $T$ and $H$ describe the same solution operator, the results from Pan and Zhang in \cite{Pan2} and \cite{Pan1} obtained with the formula $T$ carry over to Henkin's formula, $H$. Hence, the following proposition holds. 
\end{remark}
\begin{proposition} \label{image of H is continuous proposition}
    For all $k \geq 0, k \in \mathbb{Z}$ and $0 < \alpha < 1$, Henkin's solution operator $H$ is a bounded linear operator $$H: Z^{k+\alpha}_{(0,1)}(\mathbb{D}^n) \rightarrow C(\bar{\mathbb{D}}^n).$$
    Moreover, it is the canonical solution operator such that $K H = 0$.
\end{proposition}
\begin{remark}
    This result can be obtained from Nijenhuis and Woolf's formula $T$ by repeated application of the one dimensional results that  $T_j[f](z) = \frac{1}{2\pi i}\int_{\mathbb{D}}\frac{f(z_1,...,\zeta_j,z_{j+1},...,z_n)}{\zeta_j -z_j}d\zeta_j\wedge d\bar\zeta_j$ preserves H\"older regularity, and $S_j[f](z) = \frac{1}{2\pi i}\int_{\partial \mathbb{D}}\frac{f(z_1,...,\zeta_j,z_{j+1},...,z_n)}{\zeta_j -z_j}d\zeta_j$ loses only infinitesimally small H\"older regularity. This argument is described carefully in \cite{Pan2}.
\end{remark}

\section{Proof of Theorem \ref{k = 0 main result}}
Fix $g  =  \sum_{i =1}^n g_i d\bar\zeta_j  \in Z^{k + \alpha}_{(0,1)}(\bar{\mathbb{D}}^n)$. In order to prove Theorem \ref{k = 0 main result} we make a short digression. Fix a permutation $\sigma$ of $\{1,...,n\}$ and let $D_\sigma$ denote the open sectors  $\{z \in \mathbb{D}^n : |z_{\sigma(1)}|>...>|z_{\sigma(n)}|\}$. Note that the closures of these sectors cover $\mathbb{D}^n$. Hence, in view of Proposition \ref{image of H is continuous proposition}, to obtain $C^\alpha$ estimates of $H[g]$, we need only obtain the same estimates on each of the open sectors $D_\sigma$. Moreover, by definition of the regions of integration $\gamma_J(z)$ in Theorem \ref{Henkins formula theorem}, it can be seen that for each $z \in D_\sigma$, the only terms in $H[g](z)$ which do not vanish take the following simple form (up to multiplication by a constant).

$$\int_{|\zeta_{k_1}|=... = |\zeta_{k_{n-r}}|\leq |z_{j_r}|} \frac{g_{k_s}(\zeta_{k_1},...\zeta_{k_{n-r}},z_{J_\sigma})}{(\zeta_{k_1}-z_{k_1})...(\zeta_{k_{n-r}}-z_{k_{n-r}})} d\zeta_{k_1}\wedge...\wedge d^2\zeta_{k_s}\wedge...\wedge d\zeta_{k_{n-r}}.$$ Here, $(j_1,...,j_r)$ is an ordered $r$-tuple in $\{1,...,n\}$ and $J_\sigma = (\sigma(j_1),....,\sigma(j_r))$. This motivates the following definition.
\begin{definition}
Let $a, b \in \mathbb{D}$, $\zeta, z  \in \mathbb{D}^q$,  $\zeta = (\zeta_1,...,\zeta_q)$, $z = (z_1,...,z_q)$ and $h\in C^\alpha(\mathbb{D}^q\times\mathbb{D}\times\mathbb{D})$. We write $d^2\zeta_j$ for $d\bar\zeta_j \wedge d\zeta_j$ and define 
 $$P[h](z,a,b) = \int_{|\zeta_1|= ... = |\zeta_q| \leq |a|} \frac{h(\zeta,a,b)}{(\zeta_1 - z_1)...(\zeta_q - z_q)} d^2\zeta_1 \wedge... \wedge d\zeta_q.$$
\end{definition}

The following lemmas apply.
\begin{lemma} \label{estimate 1}
For each $s \in \{0,...,q\}$, let $D_s = \{(z,a,b) \in \mathbb{D}^{q+2} : |z_1| < ... < |z_s| < |a| < |z_{s+1}|< ... < |z_q|\}$. Fix some $t \in  \{1,... q\}$. Then on each open sector $D_s$,  $P[h]$ is H\"older$-\alpha$ uniformly in the variable $z_t$, with coefficient independent of $z, a, b$, and proportional to $\|h\|_{C^\alpha(\mathbb{D}^{q+2})}$.
\end{lemma}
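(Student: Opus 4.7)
The plan is to estimate the finite difference $\Delta P := P[h](z+we_t,a,b) - P[h](z,a,b)$ for $z, z+we_t \in D_s$, $\delta := |w|$, by reducing the integration in $\zeta_t$ to a classical one-dimensional Cauchy-transform H\"older estimate. Passing to polar coordinates $\zeta_j=re^{i\theta_j}$, $r \in [0,|a|]$, $\theta_j \in [0,2\pi)$, one computes $d^2\zeta_1 \wedge d\zeta_2 \wedge \cdots \wedge d\zeta_q = 2i^q r^q e^{i(\theta_2+\cdots+\theta_q)}\,dr\,d\theta_1\cdots d\theta_q$, so that
\begin{equation*}
\Delta P = 2i^q w \int_0^{|a|}r^q \int_{T^q} \frac{h(re^{i\theta},a,b)\,e^{i(\theta_2+\cdots+\theta_q)}\,d\theta\,dr}{(re^{i\theta_t}-z_t)(re^{i\theta_t}-z_t-w)\prod_{j\neq t}(re^{i\theta_j}-z_j)}.
\end{equation*}

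For $t \ge 2$, at fixed $r$ and $(\theta_j)_{j\neq t}$, the substitution $\zeta_t = re^{i\theta_t}$ rewrites the inner $\theta_t$-integral as $(2\pi/r)\,C_{\phi_r}[h](z_t)$, where $C_{\phi_r}$ denotes the circular Cauchy integral over $\phi_r=\{|\zeta_t|=r\}$ and $h$ is regarded as a function of $\zeta_t$ with other variables frozen. Restriction to $\phi_r$ yields a H\"older-$\alpha$ function of $\zeta_t$ with norm $\lesssim \|h\|_{C^\alpha}$, and a scaling argument (the Privalov constant on $\phi_r$ scales as $r^{-\alpha}$ from the unit circle constant) gives $|C_{\phi_r}h(z_t)-C_{\phi_r}h(z_t+w)| \lesssim r^{-\alpha}\|h\|_{C^\alpha}\delta^\alpha$ on either side of $\phi_r$. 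Combined with the $1/r$ prefactor this produces $r^{-1-\alpha}$, which integrated against $r^q\,dr$ is finite precisely when $q \ge 1 > \alpha$. For $t = 1$, the joint $(r,\theta_1)$-integration is an area integral over $|\zeta_1|\le|a|$, yielding a solid Cauchy transform in $z_1$ to which the classical Cauchy-Pompeiu $C^\alpha$-estimate applies.

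The transitional set of radii $r$ on which $z_t$ and $z_t+w$ lie on opposite sides of $\phi_r$ has length at most $\delta$; there Privalov's H\"older bound does not apply across the jump, but the uniform sup-norm bound on each one-sided Cauchy transform ($\lesssim \|h\|_{C^\alpha}$) still gives an $O(\delta) = O(\delta^\alpha)$ contribution. The outer integration over the remaining angles is controlled by the elementary estimate $\int_T d\theta_j / |re^{i\theta_j}-z_j| \lesssim \log^+(1/\bigl||z_j|-r\bigr|)$, together with the $L^p$-integrability of $\log^+(1/\bigl||z_j|-r\bigr|)$ on $[0,|a|]$ for every $p<\infty$. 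H\"older's inequality in $r$ then yields a bound uniform in $(z,a,b)\in D_s$, even when several $|z_j|$ cluster near the same $r$.

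The main technical obstacle is tracking the Privalov constant under the rescaling of the circle of integration, and verifying that the resulting $r^{-\alpha}$ blowup is absorbed by the $r^q$ weight from the volume form---the condition $q \ge 1 > \alpha$ is precisely what allows this. The remaining balance between the short transitional $r$-range and the $L^p$-integrable angular weights is a standard Fubini argument, closing the estimate $|\Delta P| \le C\|h\|_{C^\alpha}\delta^\alpha$ with $C$ independent of $z,a,b$.
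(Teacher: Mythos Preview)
Your approach is essentially correct but takes a substantially harder road than the paper's. The paper's proof rests on two observations you do not use. First, for $t=1$ the inner torus integral
\[
H(\zeta_1,z_2,\dots,z_q,a,b)=\int_{|\zeta_2|=\cdots=|\zeta_q|=|\zeta_1|}\frac{h(\zeta,a,b)}{(\zeta_2-z_2)\cdots(\zeta_q-z_q)}\,d\zeta_2\wedge\cdots\wedge d\zeta_q
\]
is not merely in $L^p$ but is uniformly \emph{bounded}: the rescaling $\xi_j=\zeta_j/|\zeta_1|$ turns it into the Cauchy torus integral on $(\partial\mathbb{D})^{q-1}$ of a $C^\alpha$ function, which is bounded by the classical Plemelj--Privalov theorem. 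Hence $P[h]$ is the solid Cauchy transform in $\zeta_1$ of an $L^\infty$ density and is automatically $C^{1-\epsilon}\subset C^\alpha$ in $z_1$, with no log estimates or H\"older-in-$r$ argument required. Second, and this is the main point, for $t\neq 1$ the paper does not run a separate circular-Privalov analysis at each radius: on the integration set one has $\zeta_t\bar\zeta_t=\zeta_1\bar\zeta_1$, so $d\bar\zeta_1=(\zeta_t/\zeta_1)\,d\bar\zeta_t$, and $P[h]$ rewrites as the same type of integral with $d^2\zeta_t$ in place of $d^2\zeta_1$ and an extra bounded $C^\alpha$ factor $\zeta_t\zeta_1^{-1}$ in the numerator. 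This one-line variable swap reduces every $t$ to the $t=1$ case. Your route---tracking the $r^{-\alpha}$ scaling of the Privalov constant, isolating the transitional $r$-band where $z_t$ and $z_t+w$ straddle $\phi_r$, and controlling the remaining angular factors through $L^p$-bounds on products of $\log^+(1/||z_j|-r|)$---does close, but each of these steps is rendered unnecessary by the swap. What your approach buys is that it never leaves polar coordinates and makes the role of the $r^q$ weight explicit; what the paper's approach buys is that all the delicate radius-dependent bookkeeping disappears.
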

\begin{proof}
We first consider the case $t = 1$. Observe that
\begin{align*}
    \begin{aligned}
    P[h](z,a,b) &= \int_{|\zeta_1|=...=|\zeta_q| \leq |a|}\frac{h(\zeta,a,b)}{(\zeta_1-z_1)...(\zeta_q-z_q)}d^2\zeta_1 \wedge d\zeta_2\wedge...\wedge d\zeta_q \\
    &= \int_{|\zeta_1|\leq |a|}\bigg(\int_{|\zeta_2|=...=|\zeta_q| = |\zeta_1| }\frac{h(\zeta,a,b)}{(\zeta_2-z_2)...(\zeta_q-z_q)}d\zeta_2\wedge...\wedge d\zeta_q\bigg) \frac{d^2\zeta_1}{(\zeta_1- z_1)} \\
    &=:\int_{|\zeta_1|\leq |a|}\tilde h(\zeta_1,z_2,...,z_q,a,b) \frac{d^2\zeta_1}{(\zeta_1- z_1)}.
    \end{aligned}
\end{align*}
We claim that $\tilde h$ is uniformly bounded with coefficient independent of $z, a, b$, and proportional to $\|h\|_{C^\alpha(\mathbb{D}^{q+2})}$. Indeed,
\begin{align*}
    \begin{aligned}
        \tilde h(\zeta_1,z_2,...,z_q,a,b) &= \int_{|\zeta_2|=...=|\zeta_q| = |\zeta_1| }\frac{h(\zeta,a,b)}{(\zeta_2-z_2)...(\zeta_q-z_q)}d\zeta_2\wedge...\wedge d\zeta_q \\ 
        &= \int_{|\xi_2| = ... = |\xi_q|= 1}\frac{h(\zeta_1, |\zeta_1|\xi_2,...|\zeta_1|\xi_q,a,b)}{(\xi_2-\frac{z_2}{|\zeta_1|})...(\xi_q-\frac{z_q}{|\zeta_1|})}d\xi_2\wedge...\wedge d\xi_q.\\
    \end{aligned}
\end{align*}
Since the function $(\zeta_1,\xi_2,...,\xi_q,a,b) \mapsto h(\zeta_1, |\zeta_1|\xi_2,...|\zeta_1|\xi_q,a,b)$ is $C^\alpha$ on the region of integration, its Cauchy torus integral is uniformly bounded, so $\tilde h$ is uniformly bounded as well. Moreover, $\|\tilde h\|_{L^\infty} \lesssim \|h\|_{C^{\alpha}(\mathbb{D}^{p+q})}$. Furthermore, since $P[h]$ is obtained by applying the Cauchy integral on a domain to $\tilde h$, we see that for every $0<\epsilon <1$,  $P[h](z,a,b)$ is $C^{1-\epsilon}$ in $D_s$ uniformly in $z_1$. In particular, $P[h]$ is $C^\alpha$ on the sector $D_s$ uniformly in the variable $z_1$.
\newline 

We now consider the case $t \not= 1$. Observe that $\zeta_t\bar\zeta_t = \zeta_1\bar\zeta_1$, Therefore $d\bar\zeta_1 = \frac{\zeta_t}{\zeta_1}d\bar\zeta_t$. Hence, after ignoring changes in sign,
$$P[h](z,a,b) = \int_{|\zeta_1|=...=|\zeta_q| \leq |a|}\frac{\zeta_t\zeta_1^{-1}h(\zeta,a,b)}{(\zeta_1-z_1)...(\zeta_q-z_q)}d\zeta_1 \wedge...\wedge d^2\zeta_t \wedge...\wedge d\zeta_q.$$
The same analysis as before shows that $P[h](z,a,b)$ is $C^\alpha$ in $z_t$ uniformly with coefficient independent of $z, a, b$, and proportional to $\|h\|_{C^\alpha(\mathbb{D}^{q+2})}$.
\end{proof}

\clearpage
\begin{lemma} \label{estimate 2}
On each open sector $D_s$,  $P[h]$ is H\"older$-\alpha$ uniformly in the parameters $a$, $b$ with coefficient independent of $z, a, b$, and proportional to $\|h\|_{C^\alpha(\mathbb{D}^{q+2})}$.
\end{lemma}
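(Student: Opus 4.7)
My plan is to prove H\"older continuity in $a$ and $b$ separately, reducing each case to one of two basic estimates: a sup-norm bound on $P$, and the uniform $C^\alpha$-to-$L^\infty$ bound on Cauchy torus integrals that is tacitly used in the proof of Lemma 2. I expect the bulk of the work to lie in controlling a boundary shell term that appears when $a$ is perturbed.

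For the $b$-variable the parameter enters only through $h$, so
\[
P[h](z,a,b)-P[h](z,a,b')=P[u_b](z,a),\qquad u_b(\zeta,a):=h(\zeta,a,b)-h(\zeta,a,b'),
\]
and $\|u_b\|_\infty\le |b-b'|^\alpha\|h\|_{C^\alpha}$. It therefore suffices to establish $\|P[f]\|_\infty\lesssim\|f\|_\infty$ for continuous $f$, which I would prove by a direct estimate in polar coordinates $\zeta_j=re^{i\theta_j}$: each angular integral $\int_0^{2\pi}d\theta/|re^{i\theta}-z|$ is at most $C(1+\log^+(1/||r|-|z||))$, and a product of $q$ such log factors remains integrable in $r\in[0,|a|]$ with bound uniform in $(z,a)$.

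For the $a$-variable I would decompose the difference according to whether the integrand or the region of integration is moving. Writing $P_R$ for $P$ with radial cutoff $|a|$ replaced by $R$, and taking WLOG $|a|\le|a'|$,
\[
P[h](z,a,b)-P[h](z,a',b)=P_{|a'|}[h(\cdot,a,\cdot)-h(\cdot,a',\cdot)](z,b)+\bigl(P_{|a|}-P_{|a'|}\bigr)[h(\cdot,a,\cdot)](z,b).
\]
The first (integrand) term is handled by the Step 1 sup-norm estimate applied to the H\"older difference $h(\cdot,a,\cdot)-h(\cdot,a',\cdot)$, whose $L^\infty$-norm is at most $|a-a'|^\alpha\|h\|_{C^\alpha}$, yielding the desired contribution.

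The substantive step is the shell term. It equals $\int_{|a|}^{|a'|}I(r)\,dr$ (up to sign), where the slice integral at radius $r$ becomes, after the Lemma 2 substitution $\xi_j=e^{i\theta_j}$,
\[
I(r)=2\int_{(\partial\mathbb{D})^q}\frac{\tilde k(\xi)}{\prod_j(\xi_j-z_j/r)}\,d\xi_1\wedge\cdots\wedge d\xi_q,\qquad \tilde k(\xi):=\frac{h(r\xi,a,b)}{\xi_1},
\]
with $\|\tilde k\|_{C^\alpha((\partial\mathbb{D})^q)}\lesssim\|h\|_{C^\alpha}$. Within $D_s$ and for $r\in(|z_s|,|z_{s+1}|)$ all poles $z_j/r$ satisfy $|z_j/r|\ne 1$, so I would invoke the uniform Cauchy-torus estimate $|\int_{(\partial\mathbb{D})^q}k(\xi)/\prod_j(\xi_j-w_j)\,d\xi|\le C\|k\|_{C^\alpha}$ (valid whenever $|w_j|\ne 1$) to conclude $|I(r)|\le C\|h\|_{C^\alpha}$ uniformly, and then integrate over $r\in[|a|,|a'|]$ to obtain a bound of order $\|h\|_{C^\alpha}\,||a|-|a'||\le\|h\|_{C^\alpha}\,|a-a'|^\alpha$. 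The main obstacle is justifying this Cauchy-torus estimate uniformly as the poles $w_j$ approach the unit circle; I would obtain it by iterating Privalov's theorem over the $q$ variables, absorbing at each step a small loss of H\"older exponent via the interpolation $\|g\|_{C^\beta}\lesssim\|g\|_\infty^{1-\beta/\alpha}\|g\|_{C^\alpha}^{\beta/\alpha}$, which preserves a bound by $\|k\|_{C^\alpha}$ independent of the pole locations.
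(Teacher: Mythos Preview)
Your proposal is correct and follows essentially the same route as the paper: for $b$ you reduce to a uniform $L^\infty$ bound on $P$ proved by the polar-coordinate/log-integral computation, and for $a$ you split into an integrand-difference term (handled like $b$) plus a shell term bounded via the uniform $C^\alpha\!\to\!L^\infty$ estimate for the Cauchy torus integral---exactly the paper's $I_1+I_2$ decomposition. The only cosmetic differences are that the paper evaluates the shell at a single radius and multiplies by $|\epsilon|$ rather than integrating $I(r)$, and it simply cites the torus estimate as known rather than sketching the iterated-Privalov justification you give.
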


\begin{proof}
We first show that $P[h]$ is $C^\alpha$ uniformly in $b$. Indeed, let $\epsilon \in \mathbb{C}$.
\begin{align*}
    \begin{aligned}
        |P[h](z,a,b+\epsilon) - P[h](z,a,b)| &\leq \int_{|\zeta_1| = ... =|\zeta_q| \leq |a|} \frac{\|h\|_{C^\alpha(\mathbb{D}^{q+2})}|\epsilon|^\alpha}{|\zeta_1 - z_1|... |\zeta_q-z_q|}|d^2\zeta_1|...|d\zeta_q| \\
        &\leq \|h\|_{C^\alpha(\mathbb{D}^{q+2})}|\epsilon|^\alpha \int_0^1\int_0^{2\pi}...\int_0^{2\pi} \frac{r^q \cdot d\theta_1 ... d\theta_q dr}{|re^{i\theta_1} - z_1|...|re^{i\theta_q} - z_q|} \\
        &\lesssim \|h\|_{C^\alpha(\mathbb{D}^{q+2})}|\epsilon|^\alpha \int_0^1\int_0^{2\pi}...\int_0^{2\pi} \frac{r^q \cdot d\theta_1 ... d\theta_q dr}{|r\theta_1 + |r -|z_1|||...|r\theta_q + |r -|z_q|||} \\ 
        &\lesssim \|h\|_{C^\alpha(\mathbb{D}^{q+2})}|\epsilon|^\alpha \int_0^1\big[\ln{(r\theta_1 + |r- |z_1||)}\big]_0^{2 \pi}...\big[\ln{(r\theta_q + |r- |z_q||)}\big]_0^{2\pi} dr \\
        &\lesssim \|h\|_{C^\alpha(\mathbb{D}^{q+2})}|\epsilon|^\alpha \int_0^1 |\ln{(|r- |z_1||)}|....|\ln{(|r- |z_q||)}|dr \\ 
        &\lesssim \|h\|_{C^\alpha(\mathbb{D}^{q+2})}|\epsilon|^\alpha \int_0^1 \sum_{i = 1}^q|\ln{(|r- |z_i||)}|^q dr \\
        &\lesssim \|h\|_{C^\alpha(\mathbb{D}^{q+2})}|\epsilon|^\alpha \int_0^1 |\ln{(r)}|^q dr \\
        &\lesssim \|h\|_{C^\alpha(\mathbb{D}^{q+2})}|\epsilon|^\alpha.
    \end{aligned}
\end{align*}
Similarly, we can show $P[h]$ is $C^\alpha$ uniformly in $a$.
\begin{multline*}
    P[h](z,a+\epsilon, b) - P[h](z,a,b) = \int_{|\zeta_1| = ... = |\zeta_q| \leq |a + \epsilon|}\frac{h(\zeta,a + \epsilon,b) - h(\zeta, a, b)}{(\zeta_1 -z_1)...(\zeta_q -z_q)}d^2\zeta_1\wedge...\wedge d\zeta_q \\
    + \int_{|a| \leq |\zeta_1| = ... = |\zeta_q| \leq |a + \epsilon|}\frac{h(\zeta,a,b)}{(\zeta_1 -z_1)...(\zeta_q -z_q)} d^2\zeta_1\wedge...\wedge d\zeta_q =: I_1 + I_2.
\end{multline*}
We estimate $I_1$ and $I_2$ separately.
For $I_1$, we may repeat the same analysis as we did for $b$ to see that $|I_1| \lesssim \|h\|_{C^\alpha(\mathbb{D}^{q+2})}|\epsilon|^\alpha$. Hence we need only estimate $I_2$.

\begin{align*}
    \begin{aligned}
        |I_2| &= \bigg|\int_{|a|}^{|a+ \epsilon|}\int_0^{2\pi}...\int_0^{2\pi}\frac{h(re^{i\theta_1},...,e^{ri\theta_q},a,b)}{(re^{i\theta_1} - z_1)...(re^{i\theta_q} - z_q)}r d\theta_1 (ire^{i\theta_2} d\theta_2)...(ire^{i\theta_q}d\theta_q)dr\bigg| \\ 
        &\lesssim |\epsilon| \bigg|\int_0^{2\pi}...\int_0^{2\pi}\frac{h(|a|e^{i\theta_1}...|a|e^{i\theta_q},a,b)}{(|a|e^{i\theta_1} - z_1)...(|a|e^{i\theta_q} - z_q)}|a|^q d\theta_1 (ie^{i\theta_2} d\theta_2)...(ie^{i\theta_q}d\theta_q)\bigg| \\
        &\lesssim |\epsilon|\bigg| \int_{|\xi_1| = 1}...\int_{|\xi| = 1}\frac{h(|a|\xi,a,b)\overline{i\xi_1}}{(\xi_1 - \frac{z_1}{|a|})...(\xi_q - \frac{|z_q|}{|a|})} d\xi_1 \wedge... \wedge d\xi_q \bigg|\\ 
        &\lesssim |\epsilon|\cdot \|h\|_{C^\alpha(\mathbb{D}^{q+2})}.
    \end{aligned}
\end{align*}
Here, the last inequality holds as the function $(\zeta,a,b) \mapsto h(|a|\xi,a,b)\overline{i\xi_1}$ is $C^\alpha$, so that its Cauchy torus integral is bounded, depending only on $\|h\|_{C^\alpha(\mathbb{D}^{q+2})}$. In particular, $P$ preserves H\"older$-\alpha$ regularity in the parameters $a,b$.
\end{proof}
\clearpage

By combining the results in Lemma \ref{estimate 1} and Lemma \ref{estimate 2}, we obtain the following proposition.
\begin{proposition}\label{combined estimates}
Let $0<\alpha <1$, then for each open sector $D_s$,  $P : C^\alpha(\mathbb{D}^q \times \mathbb{D} \times \mathbb{D}) \rightarrow C^\alpha(D_s)$ is a bounded linear operator.
\end{proposition}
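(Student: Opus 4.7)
The plan is to combine Lemma 2 and Lemma 3 via a standard telescoping argument to promote separate Hölder-$\alpha$ regularity in each complex coordinate into joint Hölder-$\alpha$ regularity on $D_s$. Lemma 2 supplies the estimate in each of $z_1, \ldots, z_q$, while Lemma 3 supplies it in $a$ and $b$, and in every case the Hölder constant is controlled by a multiple of $\|h\|_{C^\alpha(\mathbb{D}^{q+2})}$ that is uniform in all the other arguments.

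Given two points $w = (z, a, b)$ and $w' = (z', a', b')$ in $D_s$, I would insert intermediate points $w = w^{(0)}, w^{(1)}, \ldots, w^{(q+2)} = w'$, with $w^{(k)}$ obtained from $w^{(k-1)}$ by replacing a single coordinate with its counterpart from $w'$. Applying the triangle inequality and the one-variable Hölder estimates from Lemmas 2 and 3 to each consecutive pair would yield
$$|P[h](w) - P[h](w')| \leq \sum_{k = 1}^{q+2} |P[h](w^{(k-1)}) - P[h](w^{(k)})| \lesssim \|h\|_{C^\alpha(\mathbb{D}^{q+2})} \sum_{k} |\delta_k|^\alpha \lesssim \|h\|_{C^\alpha(\mathbb{D}^{q+2})}\, |w - w'|^\alpha,$$
where $\delta_k$ denotes the change in the $k$-th coordinate. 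The companion $L^\infty$ bound $\|P[h]\|_\infty \lesssim \|h\|_{C^\alpha(\mathbb{D}^{q+2})}$ is already implicit in the proof of Lemma 2: the inner integral $H$ was shown there to be bounded by $\|h\|_{C^\alpha}$, and $P[h]$ is then the Cauchy transform of $H$ over the disc $\{|\zeta_1| \leq |a|\}$, hence bounded. Together these ingredients give the desired boundedness $P : C^\alpha(\mathbb{D}^{q+2}) \to C^\alpha(D_s)$.

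The main obstacle is that the intermediate telescoping points $w^{(k)}$ may themselves fail to lie in $D_s$: moving a single modulus can easily break the strict chain $|z_1| < \ldots < |z_s| < |a| < |z_{s+1}| < \ldots < |z_q|$ that defines the sector. The cleanest way around this is to re-examine the proofs of Lemmas 2 and 3 and note that the single-variable Hölder estimates established there in fact hold for \emph{all} $(z, a, b) \in \mathbb{D}^{q+2}$: the sector is cited for convenience of statement but never used essentially, since the Cauchy transform of a bounded function is Hölder-$(1{-}\epsilon)$ on the whole complex plane, and the integrals appearing in the proof of Lemma 3 were estimated pointwise without reference to any ordering of moduli. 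With this observation in hand, the telescoping sum is unrestricted and the proposition follows; no new analytic estimates beyond Lemmas 2 and 3 are required.
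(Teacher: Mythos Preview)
Your proposal is correct and matches the paper's approach: the paper's own proof is a two-line invocation of Lemmas 2 and 3, asserting that separate uniform H\"older-$\alpha$ regularity in each variable yields the joint $C^\alpha$ bound on $D_s$. Your telescoping argument simply makes this explicit, and your observation that the intermediate points may leave $D_s$---resolved by noting that the estimates in the proofs of Lemmas 2 and 3 nowhere use the sector condition and in fact hold on all of $\mathbb{D}^{q+2}$---is a valid technical point that the paper passes over in silence; your resolution is correct.
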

\begin{proof}
By the preceding two lemmas, $P[h]$ is $C^{\alpha}$ in $D_s$ uniformly in each variable $z_t$ and parameters $a,b$, with coefficient proportional to $\|h\|_{C^\alpha(\mathbb{D}^{q+2})}$.
Hence $\|P[h]\|_{C^\alpha(D_s)} \lesssim \|h\|_{C^\alpha(\mathbb{D}^{q+2})}$.
\end{proof}

Theorem \ref{k = 0 main result} immediately follows from Proposition \ref{combined estimates}.
\begin{proof}{(\textit{Theorem \ref{k = 0 main result}})}
Observe that the closures of the finitely many open sectors $D_\sigma$  cover the polydisc $\mathbb{D}^n$. By Proposition 1, $H[g]$ is uniformly continuous on the polydisc $\mathbb{D}^n$. Therefore to show that $H:  Z^\alpha_{(0,1)}(\mathbb{D}^n) \rightarrow C^\alpha(\mathbb{D}^n)$ is a bounded linear operator, we need only show that given H\"older-$\alpha$ datum $g \in Z^\alpha_{(0,1)}(\mathbb{D}^n)$, the solution $H[g]$ is $C^\alpha$ on each of the open sectors $D_\sigma$, with coefficient proportional to $\|g\|_{C^\alpha({\mathbb{D}^n})}$. But this follows immediately from Proposition \ref{combined estimates}.
\end{proof}

\section{Proof of Theorem \ref{main theorem}}
Having shown Theorem \ref{k = 0 main result}, we see that for $0< \alpha < 1$, $H  :  Z^\alpha_{(0,1)}(\mathbb{D}^n) \rightarrow C^\alpha(\mathbb{D}^n)$ is a bounded linear operator. To complete the proof of Theorem \ref{main theorem}, we once again exploit the fact that $T = H$ and induct on $k$ using  Nijenhuis and Woolf's formula $T:  Z^{k +\alpha}_{(0,1)}(\mathbb{D}^n) \rightarrow C^{k+\alpha}(\mathbb{D}^n)$ with Theorem \ref{k = 0 main result} as the base case.
\begin{proof}{(\textit{Theorem 1})}
Fix an integer $k \geq 0$. As noted in \cite{Vekua}, we may differentiate the singular integral $T_j$ with respect to $z_j$ and $\bar z_j$. From which we obtain $$\frac{\partial}{\partial z_j}T_j[g](z) = \Pi_j[g](z)  := \frac{1}{2\pi i}\int_{\mathbb{D}}\frac{g(z_1,...,\zeta_j,z_{j+1},...,z_n)}{(\zeta_j -z_j)^2}d\zeta_j\wedge d\bar\zeta_j$$ and $$\frac{\partial}{\partial \bar{z_j}}T_j[g](z) = g.$$
Suppose now $T = H :  Z^{k +\alpha}_{(0,1)}(\mathbb{D}^n) \rightarrow C^{k+\alpha}(\mathbb{D}^n)$ is a bounded linear operator. Fix $g \in  Z^{k +1 +\alpha}_{(0,1)}(\mathbb{D}^n) $.

Then, as $T$ is a solution operator to the $\bar\partial-$equation, we have that  $\bar\partial T[g] = g$. In addition, for all $j = 1,..., n$,  $\frac{\partial}{\partial z_j}T_j[g](z) = \Pi_j[g](z) $ and $\frac{\partial}{\partial z_j} S_j[g](z) = S_j[\frac{\partial g}{\partial z_j}](z)$. Therefore by commuting the differentiation symbol with the operator when possible, we obtain the formula $$\frac{\partial}{\partial z_j}T[g](z) =  \sum_{i \not=  j}T_i\tilde{S_i}[\frac{\partial}{\partial \zeta_j}g_i] + \Pi_j\tilde{S}_j[g_j].$$

Since $k+1 \geq 1$, we may apply Stokes' Theorem, to the term $\Pi_j\tilde{S}_j[g_j]$ as in \cite{Vekua} to obtain $$\Pi_j\tilde{S}_j[g_j] = T_j\tilde S_j[\frac{\partial}{\partial \zeta_j}g_j] - \frac{1}{2\pi i} \int_{\partial \mathbb{D}}\frac{\tilde S_j[g_j](z_1,...,z_{j-1},\zeta_j,...z_n)}{\zeta_j - z_j} d\bar\zeta_j  = T_j\tilde S_j[\frac{\partial}{\partial \zeta_j}g_j] + S_j[\tilde S_j [g_j]\cdot \bar\zeta_j^2].$$

From which it immediately follows that  $$\frac{\partial}{\partial z_j}T[g](z) =  \sum_{i}T_i\tilde{S_i}[\frac{\partial}{\partial \zeta_j}g_i] +  S_j[\tilde S_j [g_j]\cdot \bar\zeta_j^2] = T[\frac{\partial}{\partial \zeta_j}g] + S_j[\tilde S_j [g_j]\cdot \bar\zeta_j^2].$$
By the induction hypothesis, $\|T[\frac{\partial}{\partial \zeta_j}g]\|_{C^{k+\alpha}(\mathbb{D}^n)} \lesssim \|\frac{\partial}{\partial \zeta_j}g\|_{C^{k+\alpha}(\mathbb{D})}\lesssim\|g\|_{C^{k+1+\alpha}(\mathbb{D})}$. Furthermore, for all $j$, $g_j \in C^{k + 1 + \alpha}(\mathbb{D}^n)$ we have $\|S_j[\tilde S_j [g_j]\cdot \bar\zeta_j^2]\|_{C^{k+\alpha}} \lesssim \|g_j\|_{C^{k+1+\alpha}}$ as the Cauchy integral loses arbitrarily small amounts of H\"older regularity in parameters. This completes the induction.
 \end{proof}

\section{Conclusion and extensions}
In this note, we show the existence of a solution operator $H$ on product domains that preserves H\"older regularity. It is canonical in the sense that $KH = 0$ where $K$ is the Cauchy integral over the torus. The proof rests on a careful analysis of both Henkin's formula as well as Nijenhuis and Woolf's formula for solving $\bar\partial u = g$ on the polydisc. Both formulae hold in the more general case of product domains. Indeed, a version of Henkin's formula holds for analytic polyhedra which includes product domains, while Nijenhuis and Woolf's formula holds with no changes. Therefore in principle, the same proof strategy can be employed to extend the result to product domains. However, without explicit formulae for the boundary, the estimates in Lemmas \ref{estimate 1} and \ref{estimate 2} become more technical. Hence for simplicity, we only discuss the case of the polydisc.

\printbibliography[
heading=bibintoc,
title={References}]
\end{document}